\documentclass[12pt]{article}

\usepackage[utf8]{inputenc}
\usepackage{amsmath, amssymb, amsthm}
\usepackage{graphicx}
\usepackage{hyperref}
\hypersetup{
colorlinks=true,
urlcolor=blue,
citecolor=blue}
\usepackage{setspace}
\usepackage[a4paper, margin=0.5in]{geometry} 
\onehalfspacing

\newcommand{\bc}{\begin{center}}
\newcommand{\ec}{\end{center}}

\numberwithin{equation}{section}

\newtheorem{theorem}{Theorem}[section]

\newtheorem{corollary}[theorem]{Corollary}

\newtheorem{definition}[theorem]{Definition}

\newtheorem{proposition}[theorem]{Proposition}

\title{\small Using infinitesimal symmetries   for determining the  first Maxwell time of geometric control problem on SH(2)}
\author{
  \normalsize Soukaina Ezzeroual\textsuperscript{1}, Brahim Sadik\textsuperscript{1} \\[0.2cm]
  \small \textit{\textsuperscript{1}Department of Mathematics, Cadi Ayyad University, Faculty of Sciences Semlalia, Marrakesh, Morocco} \\[0.1cm]
  \small \textit{Email: s.ezzeroual.ced@uca.ac.ma, sadik@uca.ac.ma}
}
\date{}
\begin{document}
\maketitle
\begin{abstract}
In this work, we utilize infinitesimal symmetries to compute Maxwell points which play a crucial role in studying sub-Riemannian control problems.  By examining the infinitesimal symmetries of the geometric control problem on the $\text{SH(2)}$ group,  particularly through  its Lie algebraic structure, we identify invariant quantities and constraints that streamline the Maxwell point computation.  
\end{abstract}
\section{Introduction}
\label{Sec:1}

Describing locally optimal trajectories is crucial in dynamical systems, control theory, robotics, and other areas where precise motion control is necessary. In \cite{10}, several optimal control problems are investigated to determine optimal solutions and analyze the optimality of geodesics, specifically focusing on the first Maxwell time, which indicates how long a geodesic trajectory remains optimal. For more details on sub-riemannian geometry and geometric control theory, we refer the reader to \cite{1, 10}.

An optimal control problem concerns finding controls that steer the studied system from one state to another while minimizing certain quantities, often related to energy or time. A simple and interesting example is the  geometric control problem  on the Lie group $\text{SH}(2)$ which involves controlling a system whose state space is the special Euclidean group in two dimensions.  In \cite{11}, the author makes significant contributions to this problem by presenting optimal solutions derived using Pontryagin's Maximum Principle (PMP). Additionally, the author analyzes the discrete symmetries of this problem to understand the optimality of geodesics.

Infinitesimal symmetries are transformations that leave the fundamental equations of a system invariant. They  are also powerful tools in studying problems in theoretical physics and dynamic systems, providing insight into conserved quantities and invariant structures within a system. By studying the Lie algebra of these symmetries, we can often derive conserved quantities that serve   in the computation of some properties of some control systems.

In this note, we present a method that combines algebraic and geometric tools to uncover infinitesimal symmetries of a sub-riemannian control problem. Then we apply it to  the geometric control problem on the Lie group $\text{SH}(2)$. We compute the Lie algebra of these symmetries and use it to study the loss of optimality in trajectories and compute Maxwell points.

Sub-Riemannian geodesics are generally not optimal for all times. Each geodesic has a specific point where it ceases to be optimal. At this stage, the role of the infinitesimal symmetries of our problem on the group $\text{SH(2)}$ becomes crucial. Using the Lie algebra of symmetries, we identify transformations that map a given geodesic to another geodesic. We show that there is a subgroup of symmetries isomorphic to the group $\rm{SO(1,2)}$. Using this action, we determine  the set of Maxwell points, and subsequently, the first Maxwell time corresponding to our infinitesimal symmetries. All of this is detailed in Section \ref{sec:4} of our main results.

\section{Preliminaries on Geometric Control Theory}
\label{Sec:2}
In this section, we give some preliminaries on geometric control theory. For more details on the subject, see \cite{1}
\subsection{Controllability of control affine systems}
A control affine system on a manifold ${\rm M}$ is any differential system of the form
\begin{equation}
\label{eq1}
\frac{dx}{dt}=X_0(x)+\sum_{i=1}^{m}u_i(t)X_i(x), 
\end{equation}
 where 
 \begin{itemize}
 \item $x=x(t)$ is the state,
 \item $X_0,X_1,\ldots,X_m$ are smooth vector fields on $\rm M$,
 \item $u_1(t),\ldots,u_m(t)$ are the control imputs. 
\end{itemize}    
The vector field $X_0$ is called the drift of the control system and when it equals zero, we say that the system is driftless. The control system (\ref{eq1}) is said to be controllable if the following holds: Given two states $x_0$ and $x_1$ in ${\rm  M}$ there exists  a control $u(t)$ that
steers $x_0$ to $x_1$ in some (or a priori fixed) positive time $T$.   \\

Let  $\mathcal{F}$ be  a family of smooth vector fields on ${\rm M}$, then the Lie algebra generated by $\mathcal{F}$ is the smallest Lie algebra that contains $\mathcal{F}$. It is obtained by considering all linear combinations of elements of $\cal F$, taking all Lie brackets of these, considering all linear combinations of these, and continuing  so on. It will be denoted by ${\rm Lie}(\mathcal{F})$ and ts evaluation at any point $q\in {\rm M}$ will be denoted by $Lie_q(\mathcal{F})$.
The following result gives a necessary and sufficient condition for a driftless control affine system to be controllable.
\begin{proposition}
\label{pro1}(\cite{7}).
The control affine  system $\frac{d x}{d t}=\sum_{i=1}^m u_i X_i(x)$ with $u=\left(u_1, \ldots, u_m\right) \in \mathbb{R}^m$ is  controllable if and only if ${\rm Lie}_q \mathcal{F}=T_q {\rm M}$, for all $q \in {\rm M}$.\footnote{Here  $T_q{\rm M} $ denotes the tangent space to ${\rm M}$ at the point $q$}
\end{proposition}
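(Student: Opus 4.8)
The statement is the classical Chow--Rashevskii theorem, so the plan is to prove the two implications separately and to treat the sufficiency direction as the substantive one. Throughout I would exploit that the system is driftless: replacing a control $u(t)$ on $[0,T]$ by $\tilde u(t)=-u(T-t)$ reverses the corresponding trajectory, so reachability is a symmetric (hence equivalence) relation on ${\rm M}$ and the attainable set $\mathcal{A}(q)$ from $q$ coincides with the orbit of $\mathcal{F}$ through $q$. Controllability is then equivalent to the existence of a single orbit equal to all of ${\rm M}$.

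For sufficiency (${\rm Lie}_q\mathcal{F}=T_q{\rm M}\Rightarrow$ controllable), the core is to show that each attainable set is open. First I would record the commutator-of-flows identity: for $X,Y\in\mathcal{F}$ with flows $\phi^X_t,\phi^Y_t$, one has, in any local chart around $q$, $\phi^Y_{-s}\circ\phi^X_{-s}\circ\phi^Y_s\circ\phi^X_s(q)=q+s^2[X,Y](q)+o(s^2)$, so that concatenating admissible flows produces, to leading order, motion along the bracket direction. Iterating this construction lets one move along any iterated bracket, i.e. along any vector in ${\rm Lie}_q\mathcal{F}$. Then I would pick $n=\dim{\rm M}$ iterated brackets $Z_1,\dots,Z_n$ whose values $Z_1(q),\dots,Z_n(q)$ span $T_q{\rm M}$ (possible by the rank hypothesis) and consider the endpoint map $(t_1,\dots,t_n)\mapsto \phi^{Z_n}_{t_n}\circ\cdots\circ\phi^{Z_1}_{t_1}(q)$. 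Its differential at the origin is invertible, so by the inverse function theorem its image contains a neighbourhood of $q$. Since the rank hypothesis holds at every point, the same argument at each $p\in\mathcal{A}(q)$ shows that $\mathcal{A}(q)$ contains a neighbourhood of each of its points, hence is open. The orbits therefore partition ${\rm M}$ into open sets, and connectedness of ${\rm M}$ forces exactly one orbit, giving controllability.

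For necessity (controllable $\Rightarrow{\rm Lie}_q\mathcal{F}=T_q{\rm M}$), I would invoke the Orbit Theorem of Sussmann and Nagano: the orbit through $q$ is an immersed submanifold whose tangent space contains ${\rm Lie}_q\mathcal{F}$, with equality when the vector fields are real-analytic. Controllability forces the orbit to be all of ${\rm M}$, so its tangent space is $T_q{\rm M}$; since the fields $X_i$ in our setting are left-invariant, hence analytic, on the Lie group, the equality case yields ${\rm Lie}_q\mathcal{F}=T_q{\rm M}$.

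The main obstacle is the openness step in the sufficiency direction: the commutator identity only produces motion along brackets to leading order and one bracket at a time, so the delicate point is the bookkeeping needed to guarantee that the composed map has surjective differential at once. The clean way around this is precisely to package all the required bracket directions into the single endpoint map above and appeal to the inverse function theorem, rather than trying to track the higher-order error terms by hand.
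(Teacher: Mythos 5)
The paper never proves this proposition: it is quoted verbatim from Jurdjevic \cite{7} as a known result (the Chow--Rashevskii/orbit theorem), so your attempt can only be measured against the standard literature proof, not against anything internal to the paper. Your necessity direction is essentially right, and you correctly spot the subtlety the paper glosses over: for merely smooth fields the ``only if'' is false (take $X_1=\partial_x$, $X_2=\phi(x)\partial_y$ on $\mathbb{R}^2$ with $\phi$ flat at $x=0$ and positive elsewhere; the system is controllable but $\operatorname{Lie}_{(0,y)}\mathcal{F}$ has rank $1$), so invoking the Nagano--Sussmann equality case for analytic --- in particular left-invariant --- fields is the correct repair, though one should say explicitly that this proves the proposition only in the analytic setting actually used in the paper. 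The observation that the driftless symmetric family makes reachability an equivalence relation, so attainable sets are orbits, is also correct and standard.

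The genuine gap is in your sufficiency argument. The endpoint map $(t_1,\dots,t_n)\mapsto\phi^{Z_n}_{t_n}\circ\cdots\circ\phi^{Z_1}_{t_1}(q)$ is built from flows of the iterated brackets $Z_i$, and these are \emph{not} admissible trajectories of the system --- its image being a neighbourhood of $q$ says nothing about $\mathcal{A}(q)$. If instead you replace each $\phi^{Z_i}_{t_i}$ by the iterated commutator of admissible flows, as your expansion $\phi^Y_{-s}\phi^X_{-s}\phi^Y_s\phi^X_s(q)=q+s^2[X,Y](q)+o(s^2)$ suggests (with the reparametrization $s=\sqrt{t_i}$, and sign conventions to handle $t_i<0$), the resulting composite map is continuous but not $C^1$ at the origin, so the inverse function theorem does not apply; your closing remark that ``packaging all the required bracket directions into the single endpoint map'' avoids the bookkeeping is precisely where the argument is circular, since that packaging is the unproved step. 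Two standard repairs exist: (i) the maximal-rank induction of Agrachev--Barilari--Boscain or Montgomery, which composes only flows of the $X_i$ themselves, takes a point where the rank of such a composition is maximal, and derives a contradiction with the bracket-generating condition if that rank is below $\dim M$ --- no commutator map is ever differentiated; or (ii) most economically in your own framework, reuse the Orbit Theorem you already invoke for necessity: each orbit is an immersed submanifold whose tangent space contains $\operatorname{Lie}_q\mathcal{F}=T_qM$, hence every orbit is open, orbits partition $M$, and connectedness of $M$ (an implicit hypothesis you rightly use, though the proposition does not state it) forces a single orbit, which equals $\mathcal{A}(q)$ by symmetry of the family.
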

\subsection{Sub-Riemannian Problem and optimal solutions} 
A sub-Riemannian manifold is a triplet $({\rm M}, \Delta, {\rm g})$ such that ${\rm M}$ is a manifold of dimension $n$, $\Delta$ is a smooth distribution of rank $k \leq n$, and ${\rm g}$ is a Riemannian metric defined on $\Delta$. More precisely, 
 $ \forall q\in {\rm M},\,\,   \Delta_q $ is a subspace of $T_q{\rm M}$ of dimension $k$ and 
${\rm g}$ is a family of inner products ${\rm g}_q$ in $ \Delta_q$. \\
We say that a Lipschitz curve $\gamma : [0,t_1] \mapsto {\rm M}$  is admissible if $\dot{\gamma}(t) \in \Delta_{\gamma(t)}$ for all $t \in [0,t_1]$. Then the sub-Riemannian length of such a curve is defined by:
$$
\ell(\gamma) = \int_0^{t_1} \sqrt{{\rm g}_{\gamma(t)}(\dot{\gamma}(t), \dot{\gamma}(t))} \, dt.
$$
 Given two points $q_0$ and $q_1$ of ${\rm M}$, the sub-Riemannian distance between $q_0$ and $q_1$ is stated by
$$
{\rm d}(q_0, q_1) = \inf \{ \ell(\gamma) \mid \gamma \text{ admissible}, \gamma(0) = q_0, \gamma(t_1) = q_1 \}.
$$
A sub-Riemannian problem is a control problem on a sub-Riemannian manifold $({\rm M}, \Delta, {\rm g})$ where one   seek for admissible curves $\gamma$ that satisfy the property $\ell(\gamma) = d(q(0), q(t_1))$.
Suppose there exists a family of smooth vector fields \( X_1, \ldots, X_k \) that forms an orthonormal frame on $(\Delta, {\rm g})$, i.e. $\forall q\in {\rm M},\,  \Delta_q = {\rm span}\{ X_1(q), \ldots, X_k(q)\}$ and ${\rm g}_q(X_i(q), X_j(q)) = \delta_{ij}.$
Thus, sub-Riemannian minimizers (or optimal solutions) are the solutions of the following optimal control problem on ${\rm M}$:
\begin{align}
\dot{x} &=\sum_{i=1}^k u_i X_i(x), \quad u=(u_1,...,u_k)\in \mathbb{R}^{k}, \label{eq2}\\
x(0) &=q_0, \quad x(t_1)=q_1, \label{eq3}\\
\ell&=\int_{0}^{t_1} (\sum_{i=1}^k u_i^{2} )^{1/2} dt \longrightarrow \min. \label{eq4} 
\end{align}
If we add the condition $\sum_{i=1}^k u_i^2 \leq 1$, the system  given by equations (\ref{eq2})-(\ref{eq4}), becomes: 
\begin{equation}
\label{time-min}
\begin{aligned}
& \dot{x}=\sum_{i=1}^k u_i X_i(x), \quad  \sum_{i=1}^k u_i^2 \leq 1, \\
& x(0)=q_0, \quad x(t_1)=q_1, \\
& t_1 \rightarrow \min .
\end{aligned}
\end{equation}
By Filipov's theorem (\cite{10}),  the existence of optimal solutions for the optimal control problem (\ref{time-min}) is guaranteed.\\
The Pontryagin Maximum Principle    \cite{1} provides necessary conditions for  optimality of trajectories solutions of   sub-Riemannian control problems. Following this principle, we first   compute trajectories, called extremals,  of a dynamic system in the cotangent bundle of the variety $\rm M$.  Then the projections of the extremals on the state space $\rm M$ constitute the optimal solutions and are called geodesics.  
A point $\gamma(t_1)$ is called a Maxwell point along a geodesic $\gamma$ if  there exists another geodesic $\widetilde{\gamma} \not \equiv \gamma$ such that $\gamma(t_1)=\widetilde{\gamma}(t_1)$. This means that  there exists a geodesic $\widehat{\gamma}$ coming to the point $q_1=\gamma\left(t_1\right)$ earlier than $\gamma$.
\section{Computing  infinitesimal symmetries of a sub-Riemannian problem}
Let \(({\rm M}, \Delta, {\rm g})\) be a sub-Riemannian manifold, where  $\rm M$ is endowed with a Lie group structure of dimension $n$ that is left-invariant. This means that 
\[
\Delta_{ab} = L_{a *} \Delta_b, \quad {\rm g}_b( v, w )= g_{ab}( L_{a *} v, L_{a*} w ), \quad \forall a, b \in M,
\]
where $L_{a *}$ denotes the differential of the left translation by $a$. We   consider  the sub-Riemannian system (\ref{eq2})-(\ref{eq3})-(\ref{eq4}) on  the Lie group \(({\rm M}, \Delta, {\rm g})\) and we assume it is  controllable. Then    ${\rm Lie}_q {\Delta} = T_q {\rm M}$ for every $q \in M$. Therefore 
\begin{equation}
\label{eq00}
{\rm Lie}_q (\Delta) = \operatorname{span}\{ X_1(q), \ldots, X_k(q), X_{k+1}(q), \ldots, X_n(q) \}
\end{equation}
for all $q \in M$, where $\Delta$ is given by the vector fields $X_1,\ldots,X_k$ and  $X_{k+1}, \ldots, X_n\in Lie(\Delta)$. Furthermore, suppose the problem admits optimal solutions. 
\begin{definition} (\cite{2}).
Let $M$ be a $2 m+1$ dimensional manifold. A sub-Riemannian structure on $\rm{M}$ is said to be contact if $\Delta$ is a contact distribution, i.e. $\Delta=\operatorname{ker} \omega$, where $\omega \in \Lambda^1 M$ satisfies $\left(\bigwedge^m d \omega\right) \wedge \omega \neq 0$.
\end{definition}
By reference to \cite{6}, which provides us with an important result regarding the conditions that a symmetry vector must satisfy, we have that: 
A vector \( v \) is a symmetry of our system if it satisfies the following two conditions: 
\begin{equation}
\label{eqa}
    \mathcal{L}_v(\Delta) \subset \Delta \quad \text{and} \quad \mathcal{L}_v(\rm{g}) = 0, \quad \text{where } \mathcal{L} \text{ denotes the Lie derivative.}
\end{equation}

In what follows, we outline the    necessary steps for computing  the symmetry algebra of a system defined by equations (\ref{eq2})-(\ref{eq3})-(\ref{eq4}). We consider $\Delta = \{ \Delta_q \subset T_q M \mid q \in M \}$, where $\Delta_q = \operatorname{span}\{ X_1(q), \ldots, X_k(q) \}$. Additionally, $\rm{g}= \sum_{i=1}^{n} a_i dq_i\otimes dq_i$, where $a_i$ are some constant,  represents a Riemannian metric such that $\rm{g}(X_i, X_j) = \delta_{ij}$. Furtheremore we suppose that $\Delta$ is a contact distribution, meaning that $\ker(\omega) = \Delta$, where $\omega \in \Lambda^1(\rm{M})$ is a 1-form on the manifold $\rm{M}$. In terms of the local coordinates $q_1, \dots, q_n$, the contact form can be written as $\omega = \sum_{i=1}^{n} e_i \, dq_i$, where $e_i$ are scalar functions of the coordinates $q_1, \dots, q_n$. An  arbitrary vector field $v$  can be expressed as  $ v = h_1(q_1, \ldots, q_n) X_1 + \ldots + h_n(q_1, \ldots, q_n) X_n$ and it is  a symmetry vector if it satisfies the two preceding conditions (\ref{eqa}).  We recall that   in local coordinates, each \(X_j\) is of the form 
$X_j = \sum_{i=1}^{n} g_i^j \partial q_i$, where $g_i^j$ are some scalar functions defined on $\rm{M}$.\\

Our approach begins by calculating $\mathcal{L}_v(X_1), \ldots, \mathcal{L}_v(X_k)$: 

\begin{equation}
\begin{aligned}
\mathcal{L}_v(X_j) &= [v, X_j] \\
                   &= [h_1 X_1 + \ldots + h_n X_n, X_j] \\
                   &= -[X_j, h_1 X_1 + \ldots + h_n X_n] \\
                   &= -h_1 [X_j, X_1] - (X_j h_1) X_1 - \ldots - h_j [X_j, X_j] - (X_j h_j) X_j \\
                   &\quad - \ldots - h_n [X_j, X_n] - (X_j h_n) X_n \\
                   &= -h_1 \sum_{i=1}^{n} s_i \frac{\partial}{\partial q_i} 
                   - \left( \sum_{i=1}^{n} g_i^j \frac{\partial h_1}{\partial q_i} \right) \sum_{i=1}^{n} g_i^1 \frac{\partial}{\partial q_i} \\
                   &\quad - \ldots 
                   - \left( \sum_{i=1}^{n} g_i^j \frac{\partial h_j}{\partial q_i} \right) \sum_{i=1}^{n} g_i^j \frac{\partial}{\partial q_i} \\
                   &\quad - \ldots 
                   - h_n \sum_{i=1}^{n} t_i \frac{\partial}{\partial q_i} 
                   - \left( \sum_{i=1}^{n} g_i^j \frac{\partial h_n}{\partial q_i} \right) \sum_{i=1}^{n} g_i^n \frac{\partial}{\partial q_i}
\end{aligned}
\end{equation}
where $s_i$ and $t_i$ are some scalar functions defined on $\rm{M}$. Then, applying the condition $\omega(\mathcal{L}_v(X_j)) = 0$ yields    an equation of  the form:
\begin{equation}
\label{con 1}
\sum_{i \neq j}^{n} b_i h_i + \sum_{i=1}^{n} c_i \frac{\partial h_1}{\partial q_i} + \ldots + \sum_{i=1}^{n} d_i \frac{\partial h_n}{\partial q_i} = 0,
\end{equation}
where  $b_i$, $c_i$ and $d_i$ are some scalar functions. Extending the   procedure to the all vector fields $X_1, \ldots, X_k$, this yields a system of order $k$, consisting of equations of the form (\ref{con 1}).\\
 Next, we compute the Lie derivative of $\rm{g}$. We have:
\begin{equation}
\begin{aligned}
\mathcal{L}_v(\rm{g}) &= \mathcal{L}_v\left(\sum_{i=1}^{n} a_i \, dq_i \otimes dq_i\right) \\
                 &= \sum_{i=1}^{n} a_i \, \mathcal{L}_v(dq_i \otimes dq_i) \\
                 &= \sum_{i=1}^{n} 2 a_i \, dq_i \, \mathcal{L}_v(dq_i) \\
                 &= \sum_{i=1}^{n} 2 a_i \, dq_i \, \left(i(v) \, d(dq_i) + d(i(v) \, dq_i)\right) \\
                 &= \sum_{i=1}^{n} 2 a_i \, dq_i \, d(i(v) \, dq_i) \\
                 &= \sum_{i=1}^{n} 2 a_i \, dq_i \, d(dq_i(v))
\end{aligned}
\end{equation}
where $i(v)$, is the interior product. Condition 
$\mathcal{L}_v(\rm{g})=0$ provides a system of order $\frac{n(n+1)}{2}$, consisting of equations of the form:
\begin{equation}
\label{con 2}
\sum_{i=1}^{n} k_i h_i + \sum_{i=1}^{n} l_i \frac{\partial h_1}{\partial q_i} + \ldots + \sum_{i=1}^{n} r_i \frac{\partial h_n}{\partial q_i} = 0,
\end{equation}
where $k_i$, $l_i$ and $r_i$ are some scalar functions. Soving  the system associated to equations (\ref{con 1}) and (\ref{con 2}) we  determine the functions $h_i$ and, consequently, the generators $v_i$ of our Lie algebra of symmetries.
As a consequence, we can state the following result:
\begin{proposition}
The infinitesimal symmetries of control system (\ref{eq2})-(\ref{eq4}) can be identified by calculating the flow associated with the vector field \( v_i \) at time \( s \), denoted by \( \gamma_i(s,t) = (q_1(s,t), \ldots, q_n(s,t)) \). This flow is obtained by solving the system of differential equations:
\[
\frac{\partial \gamma_i(s,t)}{\partial s} = v_i(\gamma_i(s,t)), \quad \gamma_i(0,t) = (q_1(t), \ldots, q_n(t)).
\]
\end{proposition}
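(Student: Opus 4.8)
The plan is to exponentiate each infinitesimal generator $v_i$ into a one-parameter group of local diffeomorphisms and to verify that this group acts by symmetries of the sub-Riemannian problem (\ref{eq2})-(\ref{eq4}). First I would note that, since each $v_i$ is a smooth vector field on ${\rm M}$, the standard existence-and-uniqueness theorem for ordinary differential equations guarantees that the initial value problem
\[
\frac{\partial \gamma_i(s,t)}{\partial s} = v_i(\gamma_i(s,t)), \qquad \gamma_i(0,t) = (q_1(t), \ldots, q_n(t)),
\]
admits a unique local solution $\gamma_i(s,t)$. For each fixed $s$ this solution defines the time-$s$ flow $\Phi_i^s$ of $v_i$, and the assignment $s \mapsto \Phi_i^s$ is precisely the local one-parameter group generated by $v_i$.

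Next I would invoke the fundamental relation between the Lie derivative and the flow: for any tensor field $T$ one has $\frac{d}{ds}(\Phi_i^s)^* T = (\Phi_i^s)^* \mathcal{L}_{v_i} T$. Applying this to the metric ${\rm g}$ and using the second condition in (\ref{eqa}), namely $\mathcal{L}_{v_i}({\rm g}) = 0$, gives $(\Phi_i^s)^* {\rm g} = {\rm g}$ for all admissible $s$, so that each $\Phi_i^s$ is an isometry. Applying the analogous statement to the distribution and using the first condition $\mathcal{L}_{v_i}(\Delta) \subset \Delta$ shows that the flow preserves $\Delta$, i.e. $(\Phi_i^s)_* \Delta = \Delta$. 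Hence $\Phi_i^s$ carries the pair $(\Delta, {\rm g})$ to itself and is a sub-Riemannian isometry.

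From this it follows that $\Phi_i^s$ maps admissible curves to admissible curves without changing their sub-Riemannian length $\ell$, and therefore preserves the distance ${\rm d}$ and the class of minimizers. Lifting $\Phi_i^s$ to the cotangent bundle $T^*{\rm M}$, it becomes a symplectomorphism that leaves the sub-Riemannian Hamiltonian associated with the Pontryagin Maximum Principle invariant; consequently it sends extremals to extremals and, after projection, geodesics to geodesics. Thus, for a fixed geodesic $\gamma$, the family $\gamma_i(s,\cdot) = \Phi_i^s \circ \gamma$ consists, for each $s$, of geodesics, which is exactly the statement that the infinitesimal symmetry $v_i$ is realized as a symmetry of (\ref{eq2})-(\ref{eq4}) through its flow.

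I expect the main obstacle to be this last step: rigorously passing from the infinitesimal invariance of $(\Delta, {\rm g})$ to the invariance of the geodesic family. Since geodesics are not defined directly from $(\Delta, {\rm g})$ but through the Hamiltonian system furnished by the Pontryagin Maximum Principle, one must check that a sub-Riemannian isometry indeed lifts to a canonical transformation of $T^*{\rm M}$ preserving that Hamiltonian, so that the correspondence between the infinitesimal symmetries $v_i$ and their flows $\Phi_i^s$ is faithful and maps the set of geodesics onto itself.
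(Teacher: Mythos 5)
Your argument is correct, but it is worth knowing that the paper offers essentially no proof of this proposition: it is stated as an immediate consequence (``As a consequence, we can state the following result'') of the preceding computation, in which the generators $v_i$ are obtained by solving the linear PDE system coming from the two conditions in (\ref{eqa}). What you have written is precisely the justification the paper leaves implicit, along the standard lines of Olver's theory (the paper's reference \cite{6}): local existence and uniqueness of the flow $\Phi_i^s$; the identity $\frac{d}{ds}(\Phi_i^s)^*T=(\Phi_i^s)^*\mathcal{L}_{v_i}T$ giving $(\Phi_i^s)^*{\rm g}={\rm g}$; the standard lemma that $[v_i,\Gamma(\Delta)]\subset\Gamma(\Delta)$ forces the flow to preserve $\Delta$ (via the linear ODE satisfied by $(\Phi_i^{-s})_*X_j$ in a frame of $\Delta$); and the cotangent lift being a symplectomorphism preserving the intrinsic sub-Riemannian Hamiltonian, hence carrying extremals to extremals and geodesics to geodesics. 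This last chain is exactly what the paper uses tacitly afterwards: Proposition \ref{pro action}, asserting that the flow of $v_1$ maps geodesics through the identity to geodesics, is also stated without proof and is a direct instance of your conclusion, so your argument underwrites both statements at once. Two small points you should address to make it airtight: in general the flows are only local, so either restrict $s$ to the domain of definition or note that in the application the generators $v_1=-x\partial_y-y\partial_x-\partial_z$, $v_2=\partial_x$, $v_3=\partial_y$ on ${\rm SH}(2)$ are complete; and in the paper's example the ambient tensor $g=dx^2-dy^2+dz^2$ is pseudo-Riemannian rather than Riemannian, which does not harm your argument, since you only use $\mathcal{L}_{v_i}(g)=0$ for $g$ as a tensor field together with invariance of $\Delta$, and these jointly preserve the positive-definite restriction $g\vert_\Delta$ that defines the sub-Riemannian length.
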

\section{Infinitesimal Symmetries on SH(2) and the first Maxwell time}
\label{sec:4}
\subsection{Geometric control problem on $\text{SH}(2)$}
\label{sec:5}
The motion of a unicycle on a hyperbolic plane can be described using the following driftless control system
\begin{equation}
\label{eq6}
\left\{
\begin{aligned}
\dot{x} &= u_1 \cosh z, \\
\dot{y} &= u_1 \sinh z, \\
\dot{z} &= u_2,
\end{aligned}
\right.
\end{equation}
where $u_1$ is the translational velocity and $u_2$ is the angular velocity. The configuration and state manifold ${\rm M}$ of the system is three-dimensional, where, for any point $q = (x, y, z) \in {\rm M}$,   $x$ and $y$ are position variables and $z$ is the angular orientation variable of the unicycle on the hyperbolic plane. 
 A motion $m(x, y, z)$ on the configuration manifold ${\rm M}$, parameterized by $x, y, z \in \mathbb{R}$,  is a transformation that maps a point $\textbf{a}(a_1,a_2)$ to a point $\textbf{b}(b_1,b_2)$, such that:\\
\begin{equation}
\begin{aligned}
b_1 = a_1 \cosh z + a_2 \sinh z + x,\\
b_2 = a_1 \sinh z + a_2 \cosh z + y.
\end{aligned}
\end{equation}
Composition of two motions $m_1(x_1,y_1,z_1)$ and $m_2(x_2,y_2,z_2)$ is another motion $m_3(x_3,y_3,z_3)$ given as: 
$$
m_3(x_3,y_3,z_3)=m_1(x_1,y_1,z_1).m_2(x_2,y_2,z_3)
$$
where, 
\begin{equation}
\begin{aligned}
x_3 &=x_2\cosh z_1+y_2\sinh z_1+x_1,\\
y_3 &=x_2\sinh z_1+y_2\cosh z_1+y_1,\\
z_3 &=z_1+z_2
\end{aligned}
\end{equation}
The identity motion $m_{Id}$ is given by $x=y=z=0$, and inverse of a motion  $m(x,y,z)$ is given by $m^{-1}(x^{1},y^{1},z^{1})$ where, 
$$
\begin{aligned}
 x^{1}&=-x \cosh z+ y \sinh z,\\
 y^{1}&=x \sinh z- y \cosh z,\\
 z^{1}&=-z.
\end{aligned}
$$
The motions of the pseudo-Euclidean plane exhibit a group structure, with composition serving as the group operation. This group, known as the special hyperbolic group $\text{SH(2)}$, also possesses a smooth manifold structure, which qualifies it as a Lie group. One can equivalently formulate problem (\ref{eq6}) as a sub-Riemannian problem on $\text{SH}(2)$:
\begin{align}
& \dot{q}=u_1 X_1(q)+u_2 X_2(q), \quad q \in {\rm M}=\text{SH(2)}, \quad u=\left(u_1, u_2\right) \in \mathbb{R}^2, \label{eq7}\\
& q(0)=q_0=m_{Id}, \quad q\left(t_1\right)=q_1, \\
&J=\frac{1}{2} \int_0^{t_1}\left(u_1^2+u_2^2\right) d t \rightarrow \min
\end{align}
where,
\begin{equation}
X_1 = \cosh z \, \partial_{x} + \sinh z \, \partial_{y}, \quad X_2 = \partial_{z}.
\end{equation}
The problem under consideration is formulated on a sub-Riemannian manifold $(\mathrm{M}, \Delta, \mathrm{g})$. Here, $\mathrm{M}$ represents the underlying smooth manifold, $\Delta = \operatorname{span}\{X_1, X_2\}$ denotes a  distribution of rank 2, and $\mathrm{g}$ is a Riemannian metric defined on $\Delta$ such that $\rm{g}(X_i, X_j) = \delta_{ij}$, explicitly $g = dx^{2} - dy^{2} + dz^{2}$. The vector fields $X_1$ and $X_2$, which span the distribution $\Delta$, are left-invariant with respect to the group structure associated with $\mathrm{M}$. One can see that  the family  $\mathcal{F}=\lbrace u_1X_1+u_2X_2 \vert  u=(u_1, u_2) \in \mathbb{R}^2\rbrace$ is symmetric. Computing the vector field $X_3=[X_1,X_2]$, we get  $X_3=[X_1,X_2]=-\sinh z\partial_{x}-\cosh z\partial_{y}$.
 Since $X_1$, $X_2$, and $X_3$ are linearly independent, we have  ${\rm Lie}_q(\mathcal{F})={\rm span}(X_1(q),X_2(q),X_3(q))=T_q(SH(2))$ for every point $q$. This implies that  system (\ref{eq7}) is controllable. After transforming this system into an equivalent time-optimal problem with controls \( u_1^{2} + u_2^{2} \leqslant 1 \), one can observe that 
$$
\vert u_1 X_1 + u_2 X_2\rvert  \leqslant c \vert q \rvert, \quad q \in \rm{M}.
$$
This gives  the inequality $\vert q(t)\rvert \leqslant q_0 \exp(tc)$. Therefore, the attainable sets satisfy the following a priori bound:
$$
\mathcal{A}_{q_0}(\leq t_1) \subset \left\lbrace q \in \rm{M} \mid \vert q\rvert  \leqslant q_0 \exp(t_1c) \right\rbrace.
$$
Thus, according to Filipov's theorem (\cite{10}), optimal controls exist.
\subsection{Computing extremal trajectories}
We begin by applying the Pontryagin Maximum Principle \cite{1}, which allows  to determine the extremal trajectories of system  (\ref{eq6}). For this purpose, we define the functions $h_i(\lambda) = \langle \lambda, X_i \rangle \quad i=1,2,3$, where $\lambda \in T^*M$. These functions $(h_1, h_2, h_3)$, form a coordinate system on the fibers of $T^*M$. Consequently, we adopt the          global  coordinates $(q, h_1, h_2, h_3)$ to describe the structure of $T^*M$.
The Hamiltonian from the Pontryagin Maximum Principle (PMP) is given by:
\[
h_u^v(\lambda) = \frac{\nu}{2} \left(u_1^2 + u_2^2\right) + u_1 h_1(\lambda) + u_2 h_2(\lambda),
\]
where, $u = (u_1, u_2) \in \mathbb{R}^2$, and $v  \in \{-1, 0\}$.  
Then the Pontryagin maximum principle \cite{1} for the problem under consideration reads as follows.
\begin{theorem}(\cite{1})
Let $u(t)$ and $q(t), t \in\left[0, t_1\right]$, be an optimal control and the corresponding optimal trajectory in problem \ref{eq7}. Then there exist a Lipschitzian curve $\lambda_t \in T^* M, \pi\left(\lambda_t\right)=q(t), t \in\left[0, t_1\right]$, and a number $v \in\{-1,0\}$ for which the following conditions hold for almost all $t \in\left[0, t_1\right]$:
\begin{align}
 \dot{\lambda}_t&=\vec{h}_{u(t)}^v\left(\lambda_t\right), \\
 h_{u(t)}^\nu\left(\lambda_t\right) &=\max _{u \in \mathbb{R}^2} h_u^v\left(\lambda_t\right), \\
\left(v, \lambda_t\right) &\neq 0 \label{con 3}
\end{align}
\end{theorem}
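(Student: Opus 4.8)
The plan is to obtain this statement as a direct specialization of the general Pontryagin Maximum Principle for optimal control problems on smooth manifolds recorded in \cite{1}, rather than to reprove the PMP from scratch. First I would cast problem (\ref{eq7}) into the canonical PMP format: a control system $\dot q = f(q,u)$ with $f(q,u) = u_1 X_1(q) + u_2 X_2(q)$, an integral cost $J = \int_0^{t_1} L(q,u)\,dt$ with Lagrangian $L(q,u) = \tfrac12(u_1^2+u_2^2)$, and fixed endpoints $q(0)=q_0$, $q(t_1)=q_1$. The existence of an optimal control is already guaranteed by Filippov's theorem as established in Section \ref{sec:5}, so the hypotheses of the general principle are met and there is an optimal pair $(u(t),q(t))$ to which the theorem may be applied.

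Next I would form the control-dependent pre-Hamiltonian $h_u^\nu(\lambda) = \langle \lambda, f(q,u)\rangle + \nu L(q,u)$ on $T^*M$. Using the fiber coordinates $h_i(\lambda) = \langle \lambda, X_i\rangle$ introduced above and the linearity of $f$ in $X_1,X_2$, this expands to $h_u^\nu(\lambda) = \tfrac{\nu}{2}(u_1^2+u_2^2) + u_1 h_1(\lambda) + u_2 h_2(\lambda)$, which is exactly the Hamiltonian displayed before the theorem. The general PMP then supplies, for the optimal pair, a Lipschitzian lift $\lambda_t \in T^*M$ with $\pi(\lambda_t)=q(t)$ and a scalar $\nu\le 0$, normalized to $\nu\in\{-1,0\}$, such that along $\lambda_t$ the covector evolves by the Hamiltonian vector field $\vec h_{u(t)}^\nu$ associated with $h_{u(t)}^\nu$ through the canonical symplectic structure on $T^*M$, the pointwise maximality condition holds for almost all $t$, and the pair $(\nu,\lambda_t)$ never vanishes. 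These are precisely the adjoint equation, the maximization condition, and the nontriviality condition (\ref{con 3}).

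The step requiring the most care, and the one I would treat as the main obstacle, is the interpretation of the maximization condition in the two cases of $\nu$, since this is where the structure of the extremals is decided. For $\nu=-1$ the map $u\mapsto h_u^{-1}(\lambda)$ is a strictly concave quadratic, so its unique maximizer over $\mathbb{R}^2$ is found from the first-order condition, yielding $u_1=h_1$, $u_2=h_2$, and hence the maximized normal Hamiltonian $\tfrac12(h_1^2+h_2^2)$. For $\nu=0$ the pre-Hamiltonian is linear in $u$, so its supremum over $\mathbb{R}^2$ is finite (and attained) only when $h_1=h_2\equiv 0$ along the abnormal extremal; I would then check that this constraint is compatible with $(\nu,\lambda_t)\ne 0$, which forces the remaining coordinate $h_3$ to be nonzero. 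The only further delicate point, already absorbed into the cited statement, is the regularity of the lift $\lambda_t$ and the fact that the maximized flow is a smooth Hamiltonian system on $T^*M$; for the left-invariant frame $X_1,X_2,X_3$ this holds because the structure functions are smooth, so no obstruction arises beyond the standard almost-everywhere clause.
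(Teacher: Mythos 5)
Your proposal is correct and takes essentially the same route as the paper: the paper states this theorem without proof, simply citing the general Pontryagin Maximum Principle of \cite{1}, and your argument---casting problem (\ref{eq7}) in the canonical PMP format, expanding the pre-Hamiltonian in the fiber coordinates $h_i(\lambda)=\langle\lambda,X_i\rangle$, and invoking the general principle with the normalization $\nu\in\{-1,0\}$---is exactly the intended justification. Your careful treatment of the maximization condition in the cases $\nu=-1$ (yielding $u_i=h_i$ and the maximized Hamiltonian $\tfrac{1}{2}(h_1^2+h_2^2)$) and $\nu=0$ (forcing $h_1=h_2=0$ and, by nontriviality, $h_3\neq 0$) reproduces precisely the normal/abnormal analysis the paper carries out immediately after the theorem statement, so nothing is missing.
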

Our analysis considers two distinct cases:\\
\textbf{- Abnormal case} \\
In this case $v=0$. Thus the Hamiltonian of PMP for the system takes the form:\\
$h_u(\lambda)= u_1h_1(\lambda)+u_2h_2(\lambda)$\\
$$
\begin{aligned}
\dot{h_1} &=\lbrace h_u(\lambda),h_1\rbrace\\
          &=u_1\lbrace h_1,h_1\rbrace+u_2\lbrace h_2,h_1\rbrace\\
          &=u_2h_3\\
\dot{h_2} &=-u_1h_3
\end{aligned}
$$
Then, Abnormal extremals satisfy the Hamiltonian system:

$$
\begin{aligned}
\dot{h_1}&=u_2h_3\\
\dot{h_2}&=-u_1h_3\\
\dot{q}&=u_1X_1+u_2X_2
\end{aligned}
$$
and the hypothesis $h_u(\lambda)\longrightarrow max$ implies  
\begin{center}
$h_1(\lambda_t)=h_2(\lambda_t)=0$
\end{center}
Therefore condition (\ref{con 3}) gives     $h_3(\lambda_t) \neq 0$  and the first two equations of the Hamiltonian system yield
$u_1(t)= u_2(t)=0$. So abnormal trajectories are constant.\\
\textbf{- Normal case}\\
The maximality condition\\
\begin{center}
$u_1h_1+u_2h_2- \frac{1}{2}(u_1^{2}+u_2^{2})\longrightarrow max$
\end{center}
yields $u_1=h_1$ and $u_2=h_2$,  and then then the Hamiltonnian is  $H=\frac{1}{2}(h_1^{2}+h_2^{2})$. Thus we get the system:\\
$$
\begin{aligned}
\dot{h_1}&=h_2h_3\\
\dot{h_2}&=-h_1h_3\\
\dot{h_3}&=h_1h_2\\
\dot{q}&=h_1X_1+h_2X_2\\
\end{aligned}
$$
and  Hamiltonian system in normal case is given by the equations:
\begin{align}
\dot{h}_1 &= h_2 h_3, \quad \dot{h}_2 = -h_1 h_3, \quad \dot{h}_3 = h_1 h_2  \label{vertical system}\\
\dot{x} &= h_1 \cosh z, \quad \dot{y} = h_1 \sinh z, \quad \dot{z} = h_2.
\end{align}
In this case, the initial covector $\lambda$ lies on the initial cylinder defined by  
\[ 
C = T_{q_0}^*M \cap \{H(\lambda) = \frac{1}{2}\}.
\]  
This set can be explicitly described as  
\[ 
C = \{(h_1, h_2, h_3) \in \mathbb{R}^3 \mid h_1^2 + h_2^2 = 1\}, 
\]  
representing a cylinder in the cotangent space where the Hamiltonian takes the constant value $\frac{1}{2}$.
We introduce the following change of variables:
\begin{align}
    h_1 = \cos(\alpha), \quad h_2 = \sin(\alpha).
\end{align}
With these coordinates, the vertical system (\ref{vertical system}) satisfies the equations:
\begin{align}
    \dot{h}_3 = \frac{1}{2} \sin(2\alpha), \quad \dot{\alpha} = h_3.
\end{align}
Next, we introduce another change of variables:
\begin{align}
    \gamma = 2\alpha \in \mathbb{R} / 4\pi \mathbb{Z}, \quad c = 2h_3 \in \mathbb{R}.
\end{align}
Finally, we obtain that the equation describing our vertical system corresponds to a mathematical pendulum given by:
\begin{align}
    \dot{\gamma} = c, \quad \dot{c} = -\sin(\gamma).
\end{align}
The total energy integral of the pendulum obtained is given as: $E=\frac{c^{2} }{2}- \cos(\gamma)= 2h_3^{2}-h_1^{2}+h_2^{2}$, according to this energy, the cylinder $C$ can be decomposed in the following way.
$C = \bigcup_{i=1}^{5} C_i$, where 
$$
\begin{aligned}
C_1 & =\{\lambda \in C \mid E \in(-1,1)\} \\
C_2 & =\{\lambda \in C \mid E \in(1, \infty)\} \\
C_3 & =\{\lambda \in C \mid E=1, c \neq 0\} \\
C_4 & =\{\lambda \in C \mid E=-1\}\\
C_5 & =\{\lambda \in C \mid E=1\}
\end{aligned}
$$
For a detailed derivation of the solutions to our system, refer to (\cite{11}).
\subsection{ Infinitesimal symmetries }
In this subsection, we compute the symmetry algebra of control  system (\ref{eq7}).
\begin{proposition}
Infinitesimal symmetries of the control system (\ref{eq7}) form a Lie algebra generated (over $\mathbb{R}$) by the vector fields:
\begin{equation}
\begin{aligned}
v_1&=-x\partial_{y}-y\partial_{x}-\partial_{z},\\
v_2&=\partial_{x},\\
v_3&=\partial_{y}.
\end{aligned}
\end{equation}
\end{proposition}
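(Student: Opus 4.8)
The plan is to impose the two defining conditions of (\ref{eqa}) on a general smooth vector field $v = a\,\partial_x + b\,\partial_y + c\,\partial_z$ (with $a,b,c$ functions of $(x,y,z)$) and to show that the resulting solution space is exactly the real span of $v_1,v_2,v_3$. I would carry this out in two stages, solving the metric condition first because it is the more restrictive one.

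First I would solve $\mathcal{L}_v\,\mathrm{g}=0$ for the constant diagonal pseudo-metric $\mathrm{g}=dx^2-dy^2+dz^2$ of signature $(2,1)$. Because the coefficients are constant, the Killing system reduces to the linear first-order relations $\partial_x a = \partial_y b = \partial_z c = 0$, $\partial_y a = \partial_x b$, $\partial_z a = -\partial_x c$, and $\partial_z b = \partial_y c$, which force $a,b,c$ to be affine. The general solution is the $6$-dimensional isometry algebra $\mathfrak{iso}(2,1)$, spanned by the three translations $\partial_x,\partial_y,\partial_z$ and the three ``rotations'' $R_{xy}=y\partial_x+x\partial_y$, $R_{xz}=-z\partial_x+x\partial_z$, $R_{yz}=z\partial_y+y\partial_z$.

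Next I would cut this $6$-dimensional space down using $\mathcal{L}_v\Delta\subset\Delta$. Writing $\Delta=\ker\omega$ with the contact form $\omega=\sinh z\,dx-\cosh z\,dy$ dual to $X_3$ (so that $\omega(X_1)=\omega(X_2)=0$ and $\omega(X_3)=1$), this condition is equivalent to $\mathcal{L}_v\omega=\mu\,\omega$ for some function $\mu$, since $(\mathcal{L}_v\omega)(Y)=-\omega([v,Y])$ for every $Y\in\Gamma(\Delta)$. Computing $\mathcal{L}_v\omega = d(\iota_v\omega)+\iota_v\,d\omega$ on the affine fields above, the vanishing of its $dz$-component eliminates $R_{xz}$ and $R_{yz}$, while requiring the $dx$- and $dy$-components to be proportional to $\sinh z$ and $-\cosh z$ pins the coefficient of $\partial_z$ to equal the coefficient of $R_{xy}$. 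The surviving family is then $3$-dimensional, spanned by $\partial_x$, $\partial_y$, and $\partial_z+R_{xy}=\partial_z+y\partial_x+x\partial_y$, that is, by $v_2,v_3,-v_1$. Finally I would record the brackets $[v_1,v_2]=v_3$, $[v_1,v_3]=v_2$, $[v_2,v_3]=0$ to confirm that the span is closed and hence a Lie algebra.

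I expect the one genuinely delicate point to be the bookkeeping when combining the two conditions. Neither $R_{xy}$ nor $\partial_z$ preserves $\Delta$ on its own, each producing an $\mathcal{L}_v\omega$ whose $dx$- and $dy$-coefficients yield incompatible values of $\mu$, so the distribution condition must be imposed as a single linear constraint on the whole $6$-dimensional Killing space rather than tested basis element by basis element. The symmetry survives only through the cancellation $\cosh^2 z-\sinh^2 z=1$ in the particular combination $\partial_z+R_{xy}$, and tracking this cancellation correctly is where the computation must be done with care.
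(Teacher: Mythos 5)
Your proposal is correct and arrives at exactly the span $\{v_1,v_2,v_3\}$, but it organizes the computation differently from the paper, so a comparison is worth recording. The paper parametrizes the unknown field in the moving frame, $v=f_1X_1+f_2X_2+f_3X_3$, derives the contact conditions $\omega(\mathcal{L}_vX_1)=\omega(\mathcal{L}_vX_2)=0$ as the two first-order equations (\ref{eq8})--(\ref{eq9}), and then integrates these \emph{simultaneously} with the six Killing equations for $\mathrm{g}=dx^2-dy^2+dz^2$, substituting $f_1=\partial f_3/\partial z$ into (\ref{eq10})--(\ref{eq11}) and integrating by hand until the three-parameter family in $c_1,c_3,c_4$ appears (your $\partial_z+R_{xy}=y\partial_x+x\partial_y+\partial_z=-v_1$ is the paper's $c_1=-1$ solution, and $v_2,v_3$ correspond to $c_4$ and $c_3$). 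You instead decouple the two conditions of (\ref{eqa}): you solve the Killing condition completely first, using the standard fact that Killing fields of a constant-coefficient pseudo-metric are affine, which collapses the PDE problem to the six-dimensional algebra $\mathfrak{so}(2,1)\ltimes\mathbb{R}^3$, and you then impose the distribution condition as finite-dimensional linear algebra through the equivalent reformulation $\mathcal{L}_v\omega=\mu\,\omega$ (justified correctly via $(\mathcal{L}_v\omega)(Y)=-\omega([v,Y])$ for $Y\in\Gamma(\Delta)$). Your numerics check out: the $dz$-components of $\mathcal{L}_{R_{xz}}\omega$ and $\mathcal{L}_{R_{yz}}\omega$ are $-\sinh z$ and $-\cosh z$, killing those two generators; $\mathcal{L}_{\partial_z}\omega=\cosh z\,dx-\sinh z\,dy=-\mathcal{L}_{R_{xy}}\omega$ is proportional to $\omega$ for no $\mu$ (it would force $\mu=\coth z=\tanh z$), so only $\gamma(\partial_z+R_{xy})$ survives, with $\mu=0$; and the brackets $[v_1,v_2]=v_3$, $[v_1,v_3]=v_2$, $[v_2,v_3]=0$ are right. (Your $\omega=\sinh z\,dx-\cosh z\,dy$ is the negative of the paper's form, which has a typo --- a missing $dx$ --- but the kernel, hence the condition, is identical; likewise requiring $\mathcal{L}_v$ of the ambient form $dx^2-dy^2+dz^2$ to vanish is the same reading of (\ref{eqa}) the paper uses.) What your route buys is robustness: the only PDE input is the classical classification of flat-space Killing fields, after which exhaustiveness of the answer is a transparent rank computation on a six-dimensional space, whereas the paper's direct integration in frame coefficients is more error-prone but avoids invoking the isometry classification. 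In a full write-up you should spell out the affineness of Killing fields (the usual cycling argument giving $\partial_i\partial_j v^k=0$) and note explicitly that $\{v:\mathcal{L}_v\omega=\mu\,\omega\ \text{for some function}\ \mu\}$ is a linear subspace even though $\mu$ depends on $v$ --- this is what legitimizes imposing the condition on a general combination rather than basis element by basis element, precisely the bookkeeping point you flag, and which you resolve correctly via the cancellation $\cosh^2 z-\sinh^2 z=1$.
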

\begin{proof}
It is clear that $\Delta$ constitutes a contact distribution. Indeed, $\Delta = \ker(\omega)$, where  $\omega = \cosh(z)dy - \sinh(z) \in \Lambda^1(M)$ and we have $d\omega \wedge \omega \neq 0$.  Let  $v=f_1(x,y,z)X_1+f_2(x,y,z)X_2+f_3(x,y,z)X_3$ be a vector field.   The conditions $\omega(\mathcal{L}_{v}(X_1))=0$ and $\omega(\mathcal{L}_{v}(X_2))=0$, lead to the following system:
\begin{align}
f_2+\cosh z \frac{\partial f_3}{\partial x}+\sinh z \frac{\partial f_3}{\partial y} &= 0, \label{eq8}\\
f_1-\frac{\partial f_3}{\partial z} &=0. \label{eq9}
\end{align}
Moreover, the Lie derivative of the metric $\rm{g}$ has the form:\\
\begin{equation}
\begin{aligned}
\mathcal{L}_v(\rm{g}) = & 2(\cosh z \frac{\partial f_1}{\partial x}-\sinh z \frac{\partial f_3}{\partial x})dx^2 \\
& +2(\cosh z \frac{\partial f_3}{\partial y}-\sinh z \frac{\partial f_1}{\partial y})dy^2 \\
& +2(\frac{\partial f_2}{\partial z})dz^2 \\
& +(2\cosh z \frac{\partial f_1}{\partial y}-2\sinh z \frac{\partial f_3}{\partial y}-2\sinh z \frac{\partial f_1}{\partial x}+2\cosh z\frac{\partial f_3}{\partial x})dx dy \\
& + (2f_1 \sinh z+2\cosh z \frac{\partial f_1}{\partial z}-2f_3 \cosh z -2\sinh z \frac{\partial f_3}{\partial z}+2\frac{\partial f_2}{\partial x})dx dz \\
& + (-2\sinh z\frac{\partial f_1}{\partial z}-2f_1 \cosh z+2\cosh z\frac{\partial f_3}{\partial z} + 2f_3 \sinh z +2 \frac{\partial f_2}{\partial y})dz dy 
\end{aligned}
\end{equation}
Now the relation $\mathcal{L}_v(\rm{g}) = 0 $ implies:
\begin{align}
\cosh z \frac{\partial f_1}{\partial x}-\sinh z \frac{\partial f_3}{\partial x} &=0, \label{eq10}\\
\cosh z \frac{\partial f_3}{\partial y}-\sinh z \frac{\partial f_1}{\partial y} &=0 , \label{eq11}\\
\frac{\partial f_2}{\partial z}&=0,\\
2\cosh z \frac{\partial f_1}{\partial y}-2\sinh z \frac{\partial f_3}{\partial y}-2\sinh z \frac{\partial f_1}{\partial x}+2\cosh z\frac{\partial f_3}{\partial x}&=0,\\
2f_1 \sinh z+2\cosh z \frac{\partial f_1}{\partial z}-2f_3 \cosh z -2\sinh z \frac{\partial f_3}{\partial z}+2\frac{\partial f_2}{\partial x}&=0,\\
-2\sinh z\frac{\partial f_1}{\partial z}-2f_1 \cosh z+2\cosh z\frac{\partial f_3}{\partial z} + 2f_3 \sinh z +2 \frac{\partial f_2}{\partial y} &=0.
\end{align}
Using   equation (\ref{eq9}), we have $ f_1=\frac{\partial f_3}{\partial z}$. By substituting $f_1$ into equations (\ref{eq10}) and (\ref{eq11}), we obtain:
\begin{equation}
\begin{aligned}
\cosh z \frac{\partial^{2} f_3}{\partial z \partial x}-\sinh z \frac{\partial f_3}{\partial x}=0,\\
\cosh z \frac{\partial f_3}{\partial y}-\sinh z \frac{\partial^{2} f_3}{\partial z \partial y}=0.\\
\end{aligned}
\end{equation}
Thus 
\begin{equation}
\begin{aligned}
\frac{\partial f_3}{\partial x} &=c_1 \cosh z +f(x,y),\\
\frac{\partial f_3}{\partial y} &=c_2 \sinh z +g(x,y).
\end{aligned}
\end{equation}
By integrating equation (\ref{eq10}) and  (\ref{eq11}) with respect to $x$ and $y$, respectively, we obtain: \\
\begin{equation}
\begin{aligned}
\cosh z f_1-\sinh zf_3=0,\\
\cosh z f_3-\sinh z f_1=0.
\end{aligned}
\end{equation}
Then we get:\\
\begin{equation}
\begin{aligned}
\cosh z \frac{\partial f_3}{\partial z} -\sinh z f_3=0,\\
\cosh z f_3-\sinh z \frac{\partial f_3}{\partial z}=0.
\end{aligned}
\end{equation}
Therefore, we deduce that :
\begin{equation}
\begin{aligned}
f_3 &=c_1 x\cosh z + c_2 y\sinh z + c_3 \cosh z +c_4 \sinh z,\\
f_1 &=c_1 x\sinh z +c_2 y\cosh z+c_3 \sinh z +c_4 \cosh z. 
\end{aligned}
\end{equation} 
Using  equation (\ref{eq8}) we get  $f_2=-c_1$ and $c_2=-c_1$. Hence, we obtain:
\begin{equation}
\begin{aligned}
f_1 &=c_1 x\sinh z -c_1 y \cosh z +c_3 \sinh z +c_4 \cosh z\\
f_2 &=-c_1\\
f_3 &=c_1 x\cosh z -c_1 y\sinh z + c_3 \cosh z +c_4 \sinh z
\end{aligned}
\end{equation}      
For $c_1=1$, $c_3=c_4=0$, we find $v_1=-x\partial_{y}-y\partial_{x}-\partial_{z}$.
\end{proof}
\begin{proposition}
\label{pro action}
The action of the flow of the infinitesimal transformation $v_1=-x\partial_{y}-y\partial_{x}-\partial_{z}$,
at the time $s$ maps a geodesic with the initial condition $x(0) = y(0) =
z(0) = 0$ to another geodesic.
\begin{equation}
\label{eq12}
\begin{aligned}
x & \mapsto x \cosh(s) - y \sinh(s), \quad \quad \\
y & \mapsto y \cosh(s) - x \sinh(s), \\
z & \mapsto z - s.
\end{aligned}
\end{equation}
\end{proposition}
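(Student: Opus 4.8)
The plan is to treat this as two essentially independent tasks: an explicit integration of the one-parameter flow generated by $v_1$, which produces the formula (\ref{eq12}), and a structural argument explaining why this flow sends geodesics to geodesics.

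First I would integrate the flow. By the definition of the flow in the previous section, $\gamma_1(s,t)=(x(s),y(s),z(s))$ solves $\partial_s\gamma_1=v_1(\gamma_1)$, which in coordinates is the system $\dot x=-y$, $\dot y=-x$, $\dot z=-1$, where the dot denotes $\partial/\partial s$. The last equation integrates at once to $z(s)=z(0)-s$. The planar part is the constant-coefficient linear system with matrix $A=\left(\begin{smallmatrix}0&-1\\-1&0\end{smallmatrix}\right)$; since $A^2=I$ one has $\exp(sA)=\cosh(s)\,I+\sinh(s)\,A$, and applying this to the initial data $(x(0),y(0))$ gives precisely $x\mapsto x\cosh s-y\sinh s$ and $y\mapsto y\cosh s-x\sinh s$, i.e.\ exactly (\ref{eq12}). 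This step is routine, and the flow is defined for all $s\in\mathbb{R}$.

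Next I would argue that the time-$s$ map $\Phi_s$ defined by (\ref{eq12}) carries geodesics to geodesics. The key input is that $v_1$ is an infinitesimal symmetry: by the preceding proposition it satisfies the conditions (\ref{eqa}), namely $\mathcal{L}_{v_1}\Delta\subset\Delta$ and $\mathcal{L}_{v_1}\mathrm{g}=0$. Differentiating in $s$ gives $\tfrac{d}{ds}\Phi_s^{*}\mathrm{g}=\Phi_s^{*}(\mathcal{L}_{v_1}\mathrm{g})=0$, so $\Phi_s^{*}\mathrm{g}=\mathrm{g}$ for all $s$; the same reasoning yields $\Phi_{s*}\Delta=\Delta$. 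Hence $\Phi_s$ is a one-parameter group of sub-Riemannian isometries: it maps admissible curves to admissible curves and preserves their length, because $\mathrm{g}(\Phi_{s*}\dot\gamma,\Phi_{s*}\dot\gamma)=(\Phi_s^{*}\mathrm{g})(\dot\gamma,\dot\gamma)=\mathrm{g}(\dot\gamma,\dot\gamma)$. Since a geodesic is a locally length-minimizing admissible curve and this property is invariant under a length-preserving diffeomorphism, $\Phi_s\circ\gamma$ is again a geodesic. Equivalently, one may lift $\Phi_s$ to a symplectomorphism of $T^{*}M$ preserving the normal Hamiltonian $H=\tfrac12(h_1^2+h_2^2)$, so that extremals of (\ref{vertical system}) are sent to extremals and their projections to geodesics. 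Applying this to a geodesic with $x(0)=y(0)=z(0)=0$ gives the claim; I would note that $\Phi_s$ sends the base point $(0,0,0)$ to $(0,0,-s)$, so the image geodesic issues from a translated initial point.

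The main obstacle I anticipate is not the computation but the structural step: one has to justify carefully the passage from the infinitesimal conditions (\ref{eqa}) to the finite statements $\Phi_s^{*}\mathrm{g}=\mathrm{g}$ and $\Phi_{s*}\Delta=\Delta$ (the integration of the vanishing Lie derivative along the flow), and to invoke in its correct local form the principle that a sub-Riemannian isometry preserves geodesics. By comparison, the explicit solution of the flow equations is immediate.
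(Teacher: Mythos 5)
Your proposal is correct, and in fact it supplies more than the paper does: the paper states Proposition \ref{pro action} without any proof, leaving it to follow implicitly from the general scheme of Section 3 (compute the flow of a symmetry generator $v_i$) and from the interpretation given immediately afterwards, where the transformation (\ref{eq12}) is recognized as the group operation $m(x',y',z')=m(0,0,-s)\cdot m(x,y,z)$, i.e.\ a left translation in $\mathrm{SH}(2)$. Your integration of the flow ($\dot x=-y$, $\dot y=-x$, $\dot z=-1$, with $A^2=I$ giving $\exp(sA)=\cosh(s)I+\sinh(s)A$) reproduces (\ref{eq12}) exactly, and your structural step --- integrating $\mathcal{L}_{v_1}\mathrm{g}=0$ and $\mathcal{L}_{v_1}\Delta\subset\Delta$ along the flow to get $\Phi_s^*\mathrm{g}=\mathrm{g}$ and $\Phi_{s*}\Delta=\Delta$, hence a sub-Riemannian isometry carrying geodesics to geodesics --- is sound; one can confirm it directly here, since $dx'^2-dy'^2=(\cosh^2 s-\sinh^2 s)(dx^2-dy^2)$ and $dz'=dz$. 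The paper's implicit route buys brevity: once one sees (\ref{eq12}) as $L_{m(0,0,-s)}$ and recalls that $\Delta$ and $\mathrm{g}$ are left-invariant by construction, the geodesic-preservation is automatic, with no need to integrate Lie-derivative identities. Your route is more general (it works for symmetries that are not group translations) and makes the ``integration of the infinitesimal conditions'' explicit, which is precisely the step the paper never justifies. You are also right to flag that the image geodesic issues from $(0,0,-s)$ rather than the origin; the paper quietly handles this in the subsequent corollary, where the Maxwell-point analysis uses only the hyperbolic-rotation part $R\in\mathrm{SO}(1,2)$, which fixes $z$ and does fix the base point.
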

\subsection{First Maxwell time corresponding to infinitesimal symmetries}
The transformation (\ref{eq12}) can be viewed as the result of composing two distinct motions: the first motion $m(x,y,z)$ , which represents the geodesic under consideration, and a second motion $m(0,0,-s)$. Their combination results in a new motion $m(x',y',z')$, corresponding to a different geodesic, given by:
\begin{equation}
m(x',y',z')=m(0,0,-s)\cdot m(x,y,z)
\end{equation}
The motion $m(0,0,-s)$ is a transformation in the hyperbolic plane, mapping each point $(a_1, a_2)$ to another point $ (b_1, b_2)$, such that: 
\begin{equation}
\begin{aligned}
b_1& = a_1 \cosh s - a_2 \sinh s ,\\
b_2& = a_2 \cosh s - a_1 \sinh s.
\end{aligned}
\end{equation}
This motion  can be represented in matrix form as follows:
\[
N =
\begin{pmatrix}
\cosh(s) & -\sinh(s) & 0 \\
-\sinh(s) & \cosh(s) & 0 \\
0 & 0 & 1
\end{pmatrix}
\]
It is clear that the group generated by the matrix \( N \) is isomorphic to \( \rm{SO(1,2)} \), the Lorentz group. Therefore, Proposition (\ref{pro action}) yields the following result:
\begin{corollary}
For each $R \in \rm{SO}(1,2)$, the map
\begin{equation}
(x, y, z) \longmapsto R \cdot (x, y, z),
\end{equation}
which represents a motion in the group $\rm{M}$, maps geodesics starting at the origin to other geodesics. This map is defined by the composition of these two motions.
\end{corollary}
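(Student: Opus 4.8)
The plan is to obtain the statement as a direct consequence of Proposition \ref{pro action}, upgrading the single geodesic-to-geodesic correspondence produced by the flow of $v_1$ to the correspondence induced by an element $R$. First I would record that the time-$s$ flow of $v_1$ is exactly the motion whose planar (linear) part is the matrix $N$: reading off the coefficients of the transformation (\ref{eq12}) shows that the action on the position variables $(x,y)$ is $(x,y)\mapsto(x\cosh s-y\sinh s,\,-x\sinh s+y\cosh s)$, which is precisely multiplication by $N$, while the angular variable is shifted by $-s$. Thus each $N=N(s)$ is realized as the time-$s$ flow of the infinitesimal symmetry $v_1$. Since $v_1$ satisfies $\mathcal{L}_{v_1}(\Delta)\subset\Delta$ and $\mathcal{L}_{v_1}(\mathrm{g})=0$ (the two conditions (\ref{eqa}) verified in the preceding proposition), its flow is a diffeomorphism preserving both the distribution and the metric, hence it carries admissible curves to admissible curves of the same length.

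The essential step is then to pass from \emph{preserves the sub-Riemannian structure} to \emph{maps geodesics to geodesics}. Here I would use that geodesics are the projections of normal Pontryagin extremals: a diffeomorphism preserving $(\Delta,\mathrm{g})$ lifts to a symplectomorphism of $T^*M$ conjugating the normal Hamiltonian flow to itself, so it sends extremals of the system (\ref{vertical system}) to extremals, and therefore their base projections to geodesics. Combined with Proposition \ref{pro action}, which guarantees that the image curve again emanates from the origin $q_0=m_{Id}$, this establishes the claim for each boost $N(s)$.

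Finally, to reach \emph{every} $R\in\mathrm{SO}(1,2)$ and not merely the one-parameter family $\{N(s)\}$, I would invoke closure of the symmetry class under composition: the composite of two structure-preserving motions is again structure-preserving, so the subgroup generated by the motions above acts by geodesic-preserving maps fixing the origin. The point I expect to require the most care is precisely the identification of this subgroup with $\mathrm{SO}(1,2)$. One first checks directly that $N$ is a Lorentz transformation for the form associated with $\mathrm{g}=dx^2-dy^2+dz^2$, namely $b_1^2-b_2^2=a_1^2-a_2^2$ with $z$ fixed; but the flow of $v_1$ alone only produces the one-parameter subgroup of boosts in the $(x,y)$-plane, whereas $\mathrm{SO}(1,2)$ is three-dimensional. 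I would therefore make explicit whether the corollary asserts the full group or the boost subgroup, and, if the full group is intended, supply the missing generators by exhibiting the remaining linear isometries of $x^2-y^2+z^2$ as compositions of symmetries (drawing on $v_2,v_3$ and conjugation by the boosts), verifying in each case that the resulting motion preserves the extremal system and hence maps origin-geodesics to geodesics.
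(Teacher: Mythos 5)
Your argument for the one-parameter family $N(s)$ is correct and is essentially the paper's route: the paper likewise deduces the corollary from Proposition \ref{pro action}, rewriting the flow (\ref{eq12}) as composition with the motion $m(0,0,-s)$ represented by the matrix $N$. Your intermediate step (a flow satisfying the two conditions (\ref{eqa}) preserves $(\Delta,\mathrm{g})$, hence lifts to a symplectomorphism of $T^*M$ conjugating the normal Hamiltonian flow to itself and so carries geodesics to geodesics) is in fact more carefully justified than in the paper, which gives the corollary no proof beyond the assertion that ``the group generated by the matrix $N$ is isomorphic to $\mathrm{SO}(1,2)$.''

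The gap you flag in your last paragraph is real, but it is a defect of the paper's statement rather than something your method can repair. The matrices $N(s)$ form a one-parameter group of boosts, isomorphic to $(\mathbb{R},+)$, not to the three-dimensional group $\mathrm{SO}(1,2)$; so the corollary as stated, for \emph{every} $R\in\mathrm{SO}(1,2)$, does not follow from Proposition \ref{pro action}, and the paper supplies nothing further. Moreover your proposed fix --- extracting the missing generators from $v_2,v_3$ and conjugation by boosts --- cannot work: $v_2=\partial_x$ and $v_3=\partial_y$ generate translations, and conjugating translations by boosts yields only affine maps, never the absent linear generators (rotations mixing $z$ with $x,y$). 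Indeed the computed symmetry algebra satisfies $[v_1,v_2]=v_3$, $[v_1,v_3]=v_2$, $[v_2,v_3]=0$, so it is solvable and contains no copy of the simple algebra $\mathfrak{so}(1,2)$; concretely, a linear map feeding the angular variable $z$ into $x,y$ does not preserve $\Delta=\ker(\cosh z\,dy-\sinh z\,dx)$, so no such symmetry exists. The defensible statement --- and the only one actually used afterwards, since the fixed-point set $\mathrm{S}=\{(0,0,z)\mid z\in\mathbb{R}\}$ invoked for the Maxwell-point analysis is the fixed set of the boosts and would collapse to the origin under all of $\mathrm{SO}(1,2)$ --- is the corollary restricted to the subgroup $\{N(s)\mid s\in\mathbb{R}\}\subset\mathrm{SO}(1,2)$, for which your first two paragraphs constitute a complete proof.
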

 It can be noted that the set of fixed points of this action is: 
\begin{equation}
\rm{S}=\lbrace (0,0,z)\quad  \vert z \in \mathbb{R}\rbrace
\end{equation}
\begin{proposition}
The intersection of our geodesics  with the set $\rm{S}$ provides the set of Maxwell points. Consequently, the following result is obtained:
\begin{enumerate}
\item  If $\lambda \in C_1 \cup C_3 \cup C_4$, the geodesics do  not intersect $\rm{S}$ for any $ t > 0$. 
\item If $\lambda \in C_2$, the set of Maxwell points is given by: $MAX=\lbrace (0,0,z(4nk_0K) \quad  k_0\in(0,1)\rbrace $ 
\item If $\lambda \in C_5 $, the set of Maxwell points is given by: $MAX=\lbrace (0,0,z(t)) \quad t>0\rbrace $
\end{enumerate}
\end{proposition}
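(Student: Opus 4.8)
The plan is to convert the Maxwell condition into the geometric statement that a geodesic meets the fixed locus $\rm{S}$, using the symmetry reduction of Proposition~\ref{pro action} and the preceding Corollary. Let $q(t)=(x(t),y(t),z(t))$ be a normal geodesic issuing from the origin and suppose $q(t_1)\in\rm{S}$, i.e. $x(t_1)=y(t_1)=0$. For every $R\in\rm{SO}(1,2)$ the map $(x,y,z)\mapsto R\cdot(x,y,z)$ sends geodesics from the origin to geodesics from the origin, and its fixed-point set is precisely $\rm{S}$; hence the whole family $\{R\cdot q\}_{R\in\rm{SO}(1,2)}$ passes through the same point $q(t_1)$ at the same time $t_1$. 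As soon as $R\cdot q\not\equiv q$, this produces a second geodesic reaching $q(t_1)$ at time $t_1$, so $q(t_1)$ is a Maxwell point. The first step is therefore to reduce everything to solving $x(t)=y(t)=0$ for $t>0$ stratum by stratum, and to check that the transformed geodesics are genuinely distinct (equivalently, that $q$ is not entirely contained in $\rm{S}$).

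Next I would dispose of $C_1$, $C_3$ and $C_4$ by a monotonicity argument read off from the pendulum picture \eqref{vertical system}. Writing $h_1=\cos\alpha$ and recalling $\dot{x}=h_1\cosh z$, the sign of $\dot{x}$ is controlled by $\cos\alpha$, that is, by the pendulum angle $\gamma=2\alpha$. On $C_1$ (oscillations, $E\in(-1,1)$) the angle $\gamma$ stays inside a single potential well of width $2\pi$, so $\alpha$ ranges over an interval of length $<\pi$ on which $\cos\alpha$ has constant sign; the same holds on $C_3$ (the separatrix, $E=1$, $c\neq0$), where $\gamma$ only approaches the upper equilibrium asymptotically. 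In both cases $\dot{x}=h_1\cosh z$ keeps a constant sign, so $x(t)$ is strictly monotone and cannot vanish for $t>0$, whence $q(t)\notin\rm{S}$. On $C_4$ ($E=-1$, the stable equilibrium) the controls are constant and the geodesic is the ray $(t,0,0)$, for which $x(t)=t\neq0$. This establishes item~(1).

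The substantive case is $C_2$ (rotations, $E>1$), where $\gamma$ increases monotonically through all values and $\cos\alpha$ changes sign periodically, so $x$ may return to $0$. Here I would substitute the explicit Jacobi-elliptic parametrization of $q(t)$ from \cite{11}: the pendulum period equals $4K=4K(k_0)$ with modulus $k_0\in(0,1)$ fixed by $E$, and I would solve $x(t)=y(t)=0$ by integrating the position equations over successive rotations. The returns to the $z$-axis occur exactly at the times $t=4nk_0K$, $n\in\mathbb{Z}_{>0}$, giving the Maxwell set $MAX=\{(0,0,z(4nk_0K))\}$ of item~(2); the first Maxwell time is then the value $n=1$. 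Finally, on the degenerate stratum $C_5$ ($E=1$, $c=0$, the upper equilibrium) one has $h_1\equiv0$, $h_2\equiv1$, hence $\dot{x}=\dot{y}=0$ and the geodesic is the in-place rotation $(0,0,t)$, which lies entirely in $\rm{S}$; its intersection with $\rm{S}$ is then the whole trajectory, yielding $MAX=\{(0,0,z(t))\mid t>0\}$ as in item~(3).

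The main obstacle is the exact evaluation in case $C_2$. Because the moving frame is hyperbolic, the vanishing of the position is not governed by the periodicity of $\alpha$ alone: one must also track $z(t)$, itself an elliptic integral, and show that $\int_0^{t} h_1\cosh z\,d\tau$ and $\int_0^{t} h_1\sinh z\,d\tau$ vanish simultaneously precisely at $t=4nk_0K$, with no spurious partial cancellations in between. Confirming that these are the only zeros, and that for such $t$ the transformed geodesics $R\cdot q$ are distinct from $q$, are the delicate points; the borderline stratum $C_5$, where distinctness fails because $q\subset\rm{S}$, must then be treated separately as the degenerate limit of the rotations.
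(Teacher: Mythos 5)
Your item (1) is handled correctly, but by a genuinely different route than the paper's: the paper writes $L_1=x_t+y_t$ and $L_2=x_t-y_t$ in Jacobi-elliptic form and observes that on $C_1$ their simultaneous vanishing would force $E(\varphi_t)-E(\varphi)=0$, impossible for $t>0$ (and similarly on $C_3$ via $\varphi_t-\varphi$ and $\tanh\varphi_t-\tanh\varphi$), whereas your pendulum-well argument --- on $C_1$ and $C_3$ the angle $\gamma$ stays within one well centered at a stable equilibrium, so $\alpha$ remains within distance less than $\pi/2$ of $0$ or $\pi$, $h_1=\cos\alpha$ keeps a fixed sign, and $\dot{x}=h_1\cosh z$ never vanishes except asymptotically --- reaches the same conclusion without any elliptic-function computation. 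That is cleaner and more robust than the paper's version. Your identifications of the $C_4$ geodesic as $(\pm t,0,0)$ and the $C_5$ geodesic as $(0,0,\pm t)$ also agree with the paper, and your remark that on $C_5$ the symmetry fixes the whole geodesic, so distinctness must come from elsewhere, is a legitimate point that the paper itself glosses over.

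The genuine gap is in item (2), which is the crux. You assert that ``the returns to the $z$-axis occur exactly at the times $t=4nk_0K$'' with $k_0$ ``fixed by $E$'', i.e.\ the modulus of the given trajectory --- in other words, that every $C_2$-geodesic returns to the axis once per pendulum period. This is false, and it misses the central mechanism of the paper's proof. In the explicit formulas, $x_t\pm y_t$ split into a periodic part $k\bigl(sn(\psi_t)-sn(\psi)\bigr)$ and a secular part $\pm\bigl(E(\psi_t)-E(\psi)-k'^2(\psi_t-\psi)\bigr)$ that grows linearly on average. The $4K$-periodicity of $sn$ kills only the periodic part and yields the candidate times $t=4nkK$; at those times the secular part equals $4n\bigl(E(k)-k'^2K(k)\bigr)=4n\,g(k)$, so simultaneous vanishing of $x_t$ and $y_t$ imposes the \emph{additional} transcendental condition $g(k)=0$ on the modulus. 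This second condition, analyzed in the paper via $g'(k)=kK(k)$, is exactly what singles out the special value $k_0$; without it your candidate times are returns of $h_1$-type quantities only, not of the position. So the caveat you flag (``no spurious partial cancellations in between'') is not the main obstacle --- the missing idea is the secular term and the equation $g(k_0)=0$. (As an aside: since $g(0)=0$ and $g$ is strictly increasing on $[0,1)$, the paper's own assertion that some $k_0\in(0,1)$ satisfies $g(k_0)=0$ is itself questionable, but that is a defect of the paper, not a vindication of the proposal: your version, in which the return times exist for \emph{all} moduli, contradicts the explicit formulas outright.)
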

\begin{proof}
\begin{enumerate}
\item  We begin with the case where $\lambda=(\varphi, k) \in C_1$, the geodesics intersect the set $\rm{S}$ if and only if: $x_t=y_t=0$. Let  $L_1=x_t+y_t$, and $L_2=x_t-y_t$, we then obtain the following expressions for $L_1$ and $L_2$ :
\begin{align}
    L_1 &= \omega \big( (E(\varphi_t) - E(\varphi)) - k (sn(\varphi_t) - sn(\varphi)) \big), \\
    L_2 &= \frac{1}{\omega(1 - k^2)} \big( (E(\varphi_t) - E(\varphi)) + k (sn(\varphi_t) - sn(\varphi)) \big).
\end{align}
The equations $L_1=0$ and  $L_2=0$ hold if and  anly if $(E(\varphi_t)-E(\varphi))=0$. This condition cannot be satisfied for all $t>0$. Therefore,  the geodesics do not intersect the set $\rm{S}$. In the case where  $\lambda \in  C_3 $, we have $L_1=\frac{1}{\omega}(\varphi_t-\varphi)$ and $L_2=2\omega(tanh(\varphi_t)-tanh(\varphi))$, since both equations cannot be obtained for each $t > 0$,  this leads to our result. In the last case, it is clear that  there is no intersection between our geodesic and $\rm{S}$.
\item If $\lambda \in C_2$. It is assumed that $L_1=x_t+y_t$ and $L_2=x_t-y_t$, which gives us the following expressions: 
\begin{align}
L_1 & = \omega \left( 
    -(E(\psi_t) - E(\psi)) + k'^2(\psi_t - \psi) + k \big(sn(\psi_t) - sn(\psi)\big)
\right),\\
L_2 &=\frac{1}{\omega(1-k^2)}\left( 
    (E(\psi_t) - E(\psi)) - k'^2 (\psi_t - \psi) + k \big(sn(\psi_t) - sn(\psi)\big)\right).
    \end{align}
We have $L_1=0$ and  $L_2=0$. if and only if both equations, $h_1=2k( \big(sn(\psi_t) - sn(\psi)\big)=0$ and \\ $h_2=(E(\psi_t) - E(\psi)) - k'^2 (\psi_t - \psi)=0$, hold. The equation $h_1 = 0$ holds exclusively when $t=4nkK$ with $k\in(0,1)$. Here $K(k)=\int_0^{\pi / 2} \frac{d t}{\sqrt{1-k^2 \sin ^2 t}}, \quad k'^2 =1-k^2$, for these given values, it follows that $h_2(4nkK)=4nE(k)-4nk'^2K$. We subsequently take $g(k)=E(k)-k'^2K$ $k\in \left[0, 1\right)$ we observe that its derivative $g'(k) = kK(k)$, indicating $g$ increases on the interval $\left[0, 1\right)$, with $g(\left[0, 1\right))=\left[0, 1\right)$. Therefore, there exists $k_0 \in \left(0, 1\right)$ such that  $g(k_0)=0$. Since $z(4nk_0K) = 0$, the set of Maxwell points is reduced to $\{(0, 0, 0)\}$.
\item If $\lambda \in C_5 $ For each $t$, we have $x_t = 0$ and $y_t = 0$, with $z_t \neq 0$. Subsequently, for each strictly positive $t$, our geodesic intersects $\rm{S}$. For more details on elliptic functions, (see \cite{3})
\end{enumerate}
\end{proof}
\begin{corollary}
The  first Maxwell time  $T_1^{Max}$ where our geodesic loses optimality corresponding to This action is given as:
$$
\begin{aligned}
\lambda \in C_1 \cup C_3\cup C_4 \Longrightarrow   T_1^{Max}=& +\infty \\
\lambda \in C_2 \Longrightarrow   T_1^{Max}= &4k_0K(k_0), \quad k_0 \in \left(0, 1\right) \\
\lambda \in  C_5 \Longrightarrow  T_1^{Max}=& t_0,\quad t_0\neq 0
\end{aligned}
$$
\end{corollary}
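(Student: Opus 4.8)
The plan is to obtain the corollary as a direct consequence of the preceding Proposition, which identifies, stratum by stratum, the set of Maxwell points as the intersection of the geodesic $\gamma$ with the fixed-point locus $\mathrm{S} = \{(0,0,z) \mid z \in \mathbb{R}\}$ of the $\mathrm{SO}(1,2)$-action. By definition, the first Maxwell time attached to this action is the least positive time at which $\gamma$ reaches its Maxwell set, i.e. $T_1^{Max} = \inf\{t > 0 \mid \gamma(t) \in MAX\}$, with the convention $\inf \emptyset = +\infty$. Hence the argument reduces to reading off the smallest admissible positive time in each of the three cases.

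First I would dispose of the strata where the symmetry produces no coincidence. For $\lambda \in C_1 \cup C_3 \cup C_4$, part (1) of the Proposition gives $MAX = \emptyset$, whence $T_1^{Max} = +\infty$. For $\lambda \in C_5$, part (3) shows $x_t = y_t = 0$ with $z_t \neq 0$ for all $t$, so $\gamma$ runs along the axis $\mathrm{S}$ itself; every positive instant is then a Maxwell time, optimality is lost immediately, and $T_1^{Max}$ may be recorded as an arbitrary $t_0 \neq 0$.

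The real content is the case $\lambda \in C_2$. Recalling the reduction in part (2) of the Proposition, the condition $\gamma(t) \in \mathrm{S}$ is equivalent to $L_1 = L_2 = 0$, which in turn is equivalent to the simultaneous vanishing of $h_1 = 2k(sn(\psi_t) - sn(\psi))$ and $h_2 = (E(\psi_t) - E(\psi)) - k'^2(\psi_t - \psi)$. Using the $4K$-periodicity of $sn$, the zeros of $h_1$ are exactly the times $t = 4nkK$ with $n \in \mathbb{N}^*$; substituting these into $h_2$ yields $h_2(4nkK) = 4n\,(E(k) - k'^2 K) = 4n\,g(k)$. The Proposition's monotonicity analysis of $g$ singles out the value $k_0 \in (0,1)$ with $g(k_0) = 0$, so both equations hold precisely along $t = 4nk_0 K(k_0)$, $n \geq 1$. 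The least such positive time corresponds to $n = 1$, giving $T_1^{Max} = 4k_0 K(k_0)$.

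The main obstacle lies entirely in this $C_2$ step: one must justify that $t = 4nkK$ exhaust the zeros of $h_1$ — which rests on the exact period and initial phase of the Jacobi function $sn$ along the pendulum trajectory — and that $g$ is strictly monotone with a single interior root, so that taking $n = 1$ indeed returns the \emph{first} Maxwell time and not a spurious earlier coincidence. Once these facts are in hand, the three displayed values of $T_1^{Max}$ follow by inspection, the only remaining (cosmetic) point being the reading of ``first'' in the degenerate stratum $C_5$, where $\gamma$ is pointwise Maxwell.
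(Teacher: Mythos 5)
Your proposal is correct and follows essentially the same route as the paper: the corollary is read off directly from the preceding proposition, with $T_1^{Max}=\inf\{t>0 \mid \gamma(t)\in MAX\}$ (so $+\infty$ when the geodesic never meets $\mathrm{S}$), the choice $n=1$ yielding $4k_0K(k_0)$ in the $C_2$ stratum, and immediate loss of optimality at some $t_0\neq 0$ in the degenerate stratum $C_5$. In fact your write-up is more careful than the paper's own three-line proof, since you make explicit the convention $\inf\emptyset=+\infty$ and flag the reliance on the periodicity of $sn$ and the monotonicity analysis of $g$ established in the proposition.
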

\begin{proof}
Whenever our geodesics do not intersect the set $\rm{S}$ they are optimal, and as a result $T_1^{Max}=+\infty$.
In case 2, we assign the $1$ to $n$ to find the first Maxwell time.
In case 5, there is always an intersection; consequently, our geodesic is no longer optimal. The first Maxwell time is a certain non-zero $t \neq 0$.
\end{proof}
\begin{figure}[h]
    \begin{minipage}{0.3\textwidth}
       \centering
        \includegraphics[scale=0.3]{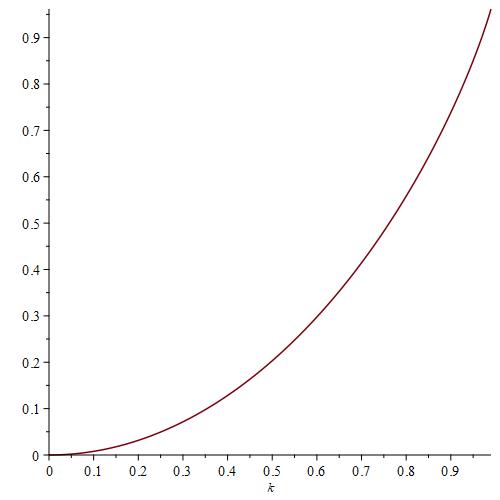}  
       \caption{The graph of the function $g$}
        \label{Figure 1}
    \end{minipage}
\end{figure}
Here, we include several figures to conclude this note and visually demonstrate our method. We use the notation $x'$, $y'$, and $z'$ to refer to the symmetries of our trajectories.
\begin{figure}[h]
    \centering
    \begin{minipage}{0.3\textwidth}
       \centering
      \includegraphics[scale=0.3]{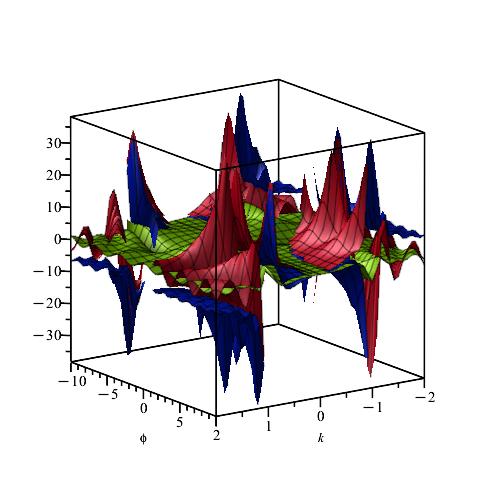}  
       \caption{Local minimizer, in the case $\lambda=(\varphi, k) \in C_1$}
        \label{Figure 2}
   \end{minipage}\hfill
    \begin{minipage}{0.3\textwidth}
        \centering
        \includegraphics[scale=0.3]{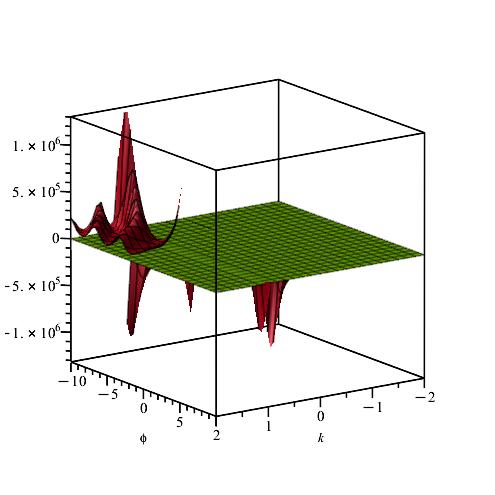}
        \caption{The trajectory's symmetric}
        \label{Figure 3}
    \end{minipage}\hfill
    \begin{minipage}{0.3\textwidth}
        \centering
        \includegraphics[scale=0.3]{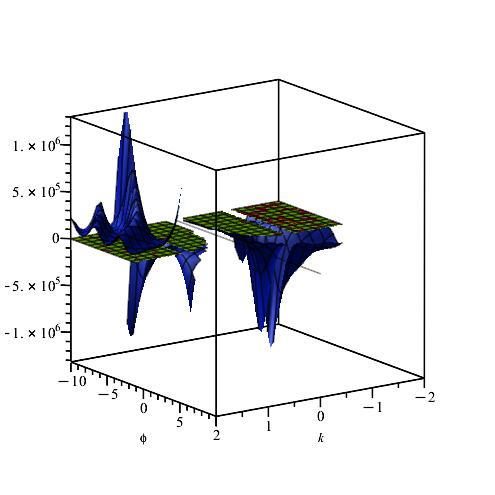}
        \caption{The trajectory $(x(t), y(t))$ and its symmetry, in the case $\lambda \in C_1$}
        \label{Figure 4}
    \end{minipage}
\end{figure}
\begin{figure}[h]
    \centering
    \begin{minipage}{0.3\textwidth}
        \centering
        \includegraphics[scale=0.3]{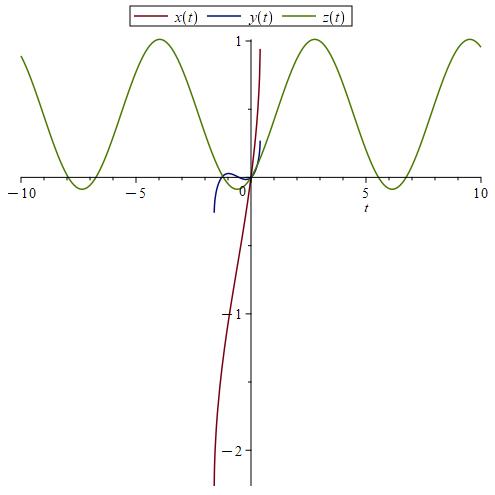}
        \caption{The local minimizer for numerical values, where $\lambda=(\varphi, k) \in C_1$.}
        \label{Figure 5}
    \end{minipage}\hfill
    \begin{minipage}{0.3\textwidth}
        \centering
         \includegraphics[scale=0.3]{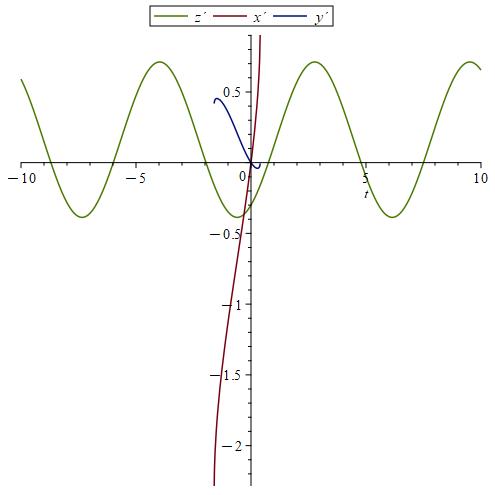}
        \caption{The trajectory's symmetric for numerical values}
        \label{Figure 6}
    \end{minipage}\hfill
    \begin{minipage}{0.3\textwidth}
        \centering
        \includegraphics[scale=0.3]{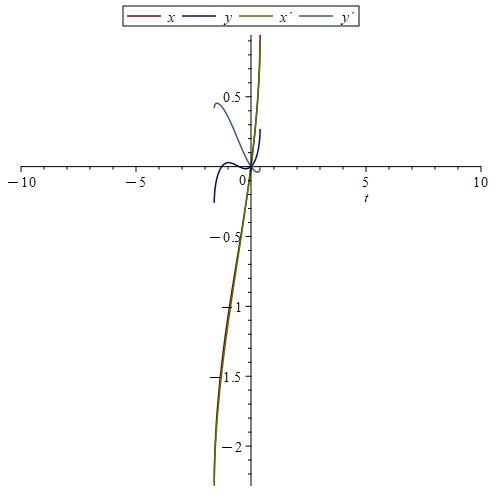}
        \caption{The trajectory $(x(t), y(t))$  and its symmetry for numerical values, where $\lambda\in C_1$.}
        \label{Figure 7}
    \end{minipage}
\end{figure}
\begin{figure}[h]
    \centering
   \begin{minipage}{0.3\textwidth}
        \centering
        \includegraphics[scale=0.3]{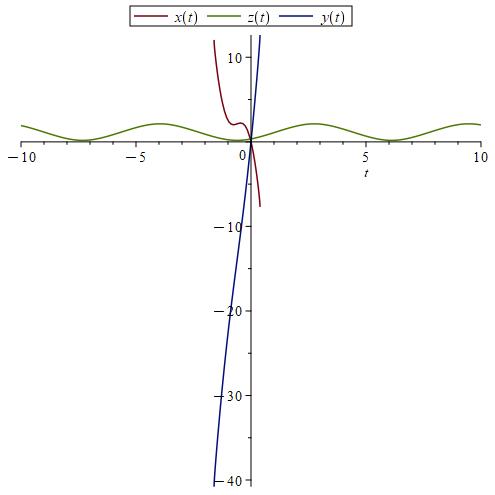}   
        \caption{The local minimizer for numerical values, where $\lambda=(\varphi, k) \in C_2$.}
    \end{minipage}\hfill
    \begin{minipage}{0.3\textwidth}
        \centering
       \includegraphics[scale=0.3]{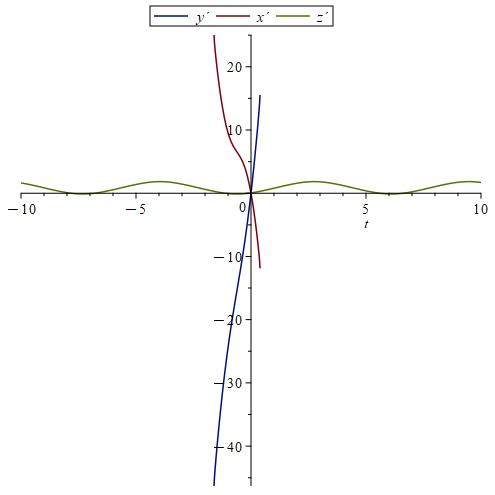}  
       \caption{The trajectory's symmetric for numerical values}
    \end{minipage}\hfill
    \begin{minipage}{0.3\textwidth}
        \centering
        \includegraphics[scale=0.3]{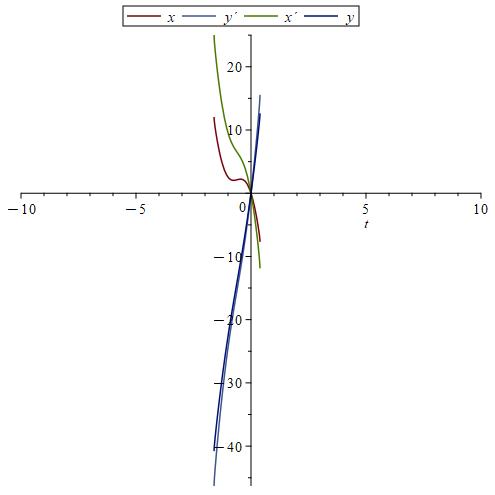}
        \caption{The trajectory $(x(t),y(t))$ and its symmetry for numerical value, where $\lambda \in C_2$}
        \label{Figure 10}
    \end{minipage}
\end{figure}
\newpage
Upon analyzing the preceding figures, several key observations emerge. Figures \ref{Figure 2}, \ref{Figure 3}, and \ref{Figure 4} provide a numerical-free overview of our trajectory and its symmetry, showcasing their intersections. Moving to Figure \ref{Figure 5}, we delve into a specific case where numerical values are assigned to $\varphi$ and $k$, allowing us to visualize the trajectories $x(t)$, $y(t)$, and $z(t)$. Figure \ref{Figure 6} mirrors this setup, illustrating the symmetry of our trajectory using the same numerical values. Transitioning to the $(x, y)$ plane, Figure \ref{Figure 7} displays the intersection between our trajectory and its symmetry, highlighting the absence of intersections for strictly positive times. This analytical approach is replicated for the second case, culminating in Figure \ref{Figure 10}, which reveals an intersection at a point closer to 0, with $k_0$ minimized.

\end{document}